\theoremstyle{plain}
\newtheorem{thrm}{Theorem}[section]
\newtheorem{rmrk}[thrm]{Remark}
\begin{document}

\newcommand{\SL}{\mathcal L^{1,p}( D)}
\newcommand{\Lp}{L^p( Dega)}
\newcommand{\CO}{C^\infty_0( \Omega)}
\newcommand{\Rn}{\mathbb R^n}
\newcommand{\Rm}{\mathbb R^m}
\newcommand{\R}{\mathbb R}
\newcommand{\Om}{\Omega}
\newcommand{\Hn}{\mathbb H^n}
\newcommand{\aB}{\alpha B}
\newcommand{\eps}{\ve}
\newcommand{\BVX}{BV_X(\Omega)}
\newcommand{\p}{\partial}
\newcommand{\IO}{\int_\Omega}
\newcommand{\bG}{\boldsymbol{G}}
\newcommand{\bg}{\mathfrak g}
\newcommand{\bz}{\mathfrak z}
\newcommand{\bv}{\mathfrak v}
\newcommand{\Bux}{\mbox{Box}}
\newcommand{\e}{\ve}
\newcommand{\X}{\mathcal X}
\newcommand{\Y}{\mathcal Y}
\newcommand{\W}{\mathcal W}
\newcommand{\la}{\lambda}
\newcommand{\vf}{\varphi}
\newcommand{\rhh}{|\nabla_H \rho|}
\newcommand{\Ba}{\mathcal{B}_\beta}
\newcommand{\Za}{Z_\beta}
\newcommand{\ra}{\rho_\beta}
\newcommand{\na}{\nabla_\beta}
\newcommand{\vt}{\vartheta}

\numberwithin{equation}{section}

\newcommand{\RN} {\mathbb{R}^N}
\newcommand{\Sob}{S^{1,p}(\Omega)}
\newcommand{\Dxk}{\frac{\partial}{\partial x_k}}
\newcommand{\Co}{C^\infty_0(\Omega)}
\newcommand{\Je}{J_\ve}
\newcommand{\beq}{\begin{equation}}
\newcommand{\bea}[1]{\begin{array}{#1} }
\newcommand{\eeq}{ \end{equation}}
\newcommand{\ea}{ \end{array}}
\newcommand{\eh}{\ve h}
\newcommand{\Dxi}{\frac{\partial}{\partial x_{i}}}
\newcommand{\Dyi}{\frac{\partial}{\partial y_{i}}}
\newcommand{\Dt}{\frac{\partial}{\partial t}}
\newcommand{\aBa}{(\alpha+1)B}
\newcommand{\GF}{\psi^{1+\frac{1}{2\alpha}}}
\newcommand{\GS}{\psi^{\frac12}}
\newcommand{\HFF}{\frac{\psi}{\rho}}
\newcommand{\HSS}{\frac{\psi}{\rho}}
\newcommand{\HFS}{\rho\psi^{\frac12-\frac{1}{2\alpha}}}
\newcommand{\HSF}{\frac{\psi^{\frac32+\frac{1}{2\alpha}}}{\rho}}
\newcommand{\AF}{\rho}
\newcommand{\AR}{\rho{\psi}^{\frac{1}{2}+\frac{1}{2\alpha}}}
\newcommand{\PF}{\alpha\frac{\psi}{|x|}}
\newcommand{\PS}{\alpha\frac{\psi}{\rho}}
\newcommand{\ds}{\displaystyle}
\newcommand{\Zt}{{\mathcal Z}^{t}}
\newcommand{\XPSI}{2\alpha\psi \begin{pmatrix} \frac{x}{|x|^2}\\ 0 \end{pmatrix} - 2\alpha\frac{{\psi}^2}{\rho^2}\begin{pmatrix} x \\ (\alpha +1)|x|^{-\alpha}y \end{pmatrix}}
\newcommand{\Z}{ \begin{pmatrix} x \\ (\alpha + 1)|x|^{-\alpha}y \end{pmatrix} }
\newcommand{\ZZ}{ \begin{pmatrix} xx^{t} & (\alpha + 1)|x|^{-\alpha}x y^{t}\\
     (\alpha + 1)|x|^{-\alpha}x^{t} y &   (\alpha + 1)^2  |x|^{-2\alpha}yy^{t}\end{pmatrix}}
\newcommand{\norm}[1]{\lVert#1 \rVert}
\newcommand{\ve}{\varepsilon}

\title[Vanishing order etc. ]{ Sharp Vanishing order of solutions to  Stationary Schr\"odinger equations on Carnot groups of arbitrary step}

\author{Agnid Banerjee}
\address{TIFR-CAM, Bangalore- 560065
} \email[Agnid Banerjee]{agnidban@gmail.com}

%
%
%
\keywords{}
\subjclass{}
\begin{abstract}
	Based on a variant of the frequency function approach of  Almgren(\cite{Al}), under appropriate assumptions  we  establish  an optimal upper bound on the  vanishing order of solutions to stationary Schr\"odinger equations associated to sub-Laplacian on  Carnot groups of arbitrary step. Such a bound provides a quantitative form of strong unique continuation and can be thought of as a subelliptic analogue of the recent results obtained by  Bakri (\cite{Bk}) and Zhu (\cite{Zhu1}) for the standard Laplacian.
\end{abstract}

\maketitle

\section{Introduction}\label{S:intro}
We say that the vanishing order of a function $u$ is $\ell$ at $x_0$, if $\ell$ is the largest  integer such that $D^{\alpha} u=0$ for all $|\alpha| \leq \ell$, where $\alpha$ is a multi-index. In the papers \cite{DF1}, \cite{DF2}, Donnelly and Fefferman showed that if $u$ is an eigenfunction with eigenvalue  $\lambda$ on a  smooth, compact and connected $n$-dimensional Riemannian manifold $M$, then  the maximal vanishing order of $u$ is less than $C \sqrt{\lambda}$ where $C$ only depends on the manifold $M$. Using this estimate, they showed  that $H^{n-1}(x: u_{\lambda}(x)=0)\leq C  \sqrt{\lambda}$ where $u_{\lambda}$ is the eigenfunction corresponding to $\lambda$ and therefore  gave a complete  answer to a famous  conjecture of Yau ( \cite{Yau}). We note that the zero set of $u_{\lambda}$ is referred to as the nodal set. This order of vanishing is sharp. If, in fact, we consider $M = \mathbb S^n \subset \R^{n+1}$, and we take the spherical harmonic $Y_\kappa$ given by the restriction to $\mathbb S^n$ of the function $f(x_1,...,x_n,x_{n+1}) = \Re (x_1 + i x_2)^\kappa$, then one has $\Delta_{\mathbb S^n} Y_\kappa = - \lambda_\kappa Y_\kappa$, with $\lambda_\kappa = \kappa(\kappa+n-2)$, and the order of vanishing of $Y_\kappa$ at the North pole $(0,...,0,1)$ is precisely $\kappa = C \sqrt {\lambda_\kappa}$. 

In his work  \cite{Ku}  Kukavica considered the more general problem
\begin{equation}\label{e1}
\Delta u = V(x) u,
\end{equation}
where $V\in W^{1, \infty}$, and showed that the maximal vanishing order  of $u$ is bounded above by $C( 1+ ||V||_{W^{1, \infty}})$. He also conjectured that the rate of vanishing order of $u$ is less than or equal to $C(1+ ||V||_{L^{\infty}}^{1/2})$, which agrees with the Donnelly-Fefferman result when $V = - \lambda$. Employing Carleman estimates,  Kenig in \cite{K} showed that the rate of  vanishing order  of $u$  is less than $C(1+ ||V||_{L^{\infty}}^{2/3})$, and that furthermore the exponent $\frac{2}{3}$ is sharp for complex  potentials $V$ based on a counterexample of Meshov. (see \cite{Me}).

Recently, the rate of vanishing order of $u$ has been shown to be less than $C(1+ ||V||_{W^{1, \infty}}^{1/2})$  independently by Bakri in \cite{Bk} and Zhu in \cite{Zhu1}. Bakri's approach is based on  an extension of the Carleman method in \cite{DF1}. In this connection, we also quote the recent interesting paper by R\"uland \cite{Ru}, where Carleman estimates are used to obtain related quantitative unique continuation results for nonlocal Schr\"odinger operators such as $(-\Delta)^{s/2} + V$. On the other hand, Zhu's approach is based on a variant of the frequency function approach  employed by Garofalo and Lin in \cite{GL1}, \cite{GL2}), in the context of strong  unique  continuation problems. Such variant consists in studying the growth properties of the following average of the Almgren's height function 
\[
H(r) = \int_{B_r(x_0)} u^2 (r^2 - |x-x_0|^2)^\alpha dx, \ \ \ \ \ \ \ \ \alpha> - 1,
\]
first introduced by Kukavica in \cite{Ku2} to study quantitative unique continuation and vortex degree estimates for solutions of the Ginzburg-Landau equation. 

In \cite{Bk} and  \cite{Zhu1} it was assumed that $u$ be a solution in $B_{10}$ to 
\begin{equation}
\Delta u= Vu,
\end{equation}
with $||V||_{W^{1, \infty}} \leq M$ and  $||u||_{L^{\infty}} \leq C_0$, and that furthermore $\sup_{B_1} |u| \geq 1$. Then, it was proved that $u$ satisfies the sharp growth estimate
\begin{equation}\label{gn}
||u||_{L^{\infty}(B_r)} \geq B r^{C(1 + \sqrt{M})},
\end{equation}
where $B, C$ depend only on $n$ and $C_0$.  Such an estimate  has been recently   extended to stationary Schr\"odinger equations associated to   generalized  Baouendi Grushin operators  in \cite{BG1} and also    for elliptic equations with Lipschitz  principle part at the boundary of Dini domains in \cite{BG2}.  Over here, we would like to refer to\cite{Ba}, \cite{Gr1} and \cite{Gr2} for a detailed account on Baouendi-Grushin operators and  corresponding hypoellipticity results. 

\medskip

Therefore given the current  interest in quantitative forms of strong unique  continuation and the crucial role played by them in the past  to get Hausdorff measure estimates on the nodal sets   as in \cite{DF1} and \cite{DF2} has provided  us with a natural motivation to study  quantitative uniqueness  for elliptic equations on Carnot groups. More precisely, we analyze equations of the form
\begin{equation}\label{e0}
\Delta_{H} u= V u,
\end{equation}
where  $\Delta_{H}$ is the sub-Laplacian on  a Carnot group $\mathbb{G}$ ( see \eqref{Def} below ) and the discrepancy $E_u$ of the solution $u$( see \eqref{dis} for the definition) at the identity $e$   satisfies the growth assumption \eqref{dis1}. The growth assumption \eqref{dis1} can  be thought of as the measure of a  certain symmetry type property of $u$ and we have  kept   a brief  discussion on this aspect  in Section 2. 

\medskip

 The assumptions on the potential function $V$ are specified in \eqref{vasump} in the next section. They represent the counterpart  on  $\mathbb{G}$ with respect to certain non-isotropic dilations  of the following Euclidean requirements
 \begin{equation}\label{Vu}
 |V(x)|\le M,\ \ \ \ \ \ \ |<x,DV(x)>| \le M,
 \end{equation}
for the classical Schr\"odinger equation $\Delta u = V u$ in $\Rn$.  Such non-isotropic dilations will be described in Section 2. 

 \medskip

Now in the case of Carnot groups, unlike the Euclidean case, the reader  should  notice  that although   we have an additional  assumption  \eqref{dis1} on the discrepancy  $E_{u}$ of $u$, it is still  not very restrictive in the sense that  strong unique continuation property is in general not true for solutions to \eqref{e0}. This follows from some interesting work of Bahouri (\cite{Bah}) where the author showed that unique continuation is not true for  even smooth and  compactly supported perturbations of the sub-Laplacian. Therefore, one cannot expect any quantitative estimates to hold either without further assumptions. Once we introduce the appropriate notion in Section 2, the reader   will also clearly see  that the discrepancy $E_u$  is identically zero in the Euclidean case, i.e. when we  view  $\mathbb{G}= \R^{n}$  as a Carnot group of step 1.      On the other hand, it turns out that  so far,  only with this  growth  assumption on $E_{u}$ that we have in \eqref{dis1} ,  strong unique continuation property (sucp)  for \eqref{e0} is known.  This  follows from the interesting  work of Garofalo and Lanconelli  ( see \cite{GLa} ) in  the case when $\mathbb{G}=\mathbb{H}^{n}$(Heisenberg group which is a Carnot group of step 2). Such a  result has been  recently generalized to Carnot groups of arbitrary step  by Garofalo and Rotz in \cite{GR}. It is to be noted that the results in \cite{GLa} and \cite{GR} follow the circle of ideas in the fundamental works \cite{GL1} and \cite{GL2}. 

\medskip

 The  purpose of our work  is to  therefore  derive sharp quantitative estimates for equations \eqref{e0} in the setup of \cite{GR} where sucp is known  so far, i.e. with the  growth assumption on the discrepancy term $E_{u}$ as in \eqref{dis1}.
Our main result Theorem \ref{main} should be seen as a subelliptic generalization  of the above mentioned Euclidean results in \cite{Bk} and \cite{Zhu1}. As the reader will realize,  such  a generalization relies on the deep link existing between the growth properties of a certain generalized Almgren frequency and  the sub-elliptic structure of $\mathbb{G}$. It turns out that in the end, they  beautifully combine. 

\medskip

In this paper , similar to \cite{Zhu1}, and \cite{BG1}, we work with an appropriate weighted version of the Almgren's Frequency  which is somewhat different from the one introduced in \cite{GR}. Having said that, we do follow  \cite{GR}  closely in parts.  Since we are interested in the question of sharp  vanishing order estimates,  it is worth emphasizing that as opposed to Theorem 7.3  in  \cite{GR},   we require some kind of  monotonicity  of the generalized  frequency that is introduced in Section 3 and not just  the  boundedness of the frequency (see Theorem \ref{mon}).  Moreover, in order to recover the  sharp vanishing order in our subelliptic situation,  we also  need to keep track of  how the  several constants that appear in  our computations   depend on the subelliptic $C^{1}$ norm of $V$ as in \eqref{vasump}  and this entails  some novel work.  As the reader will notice in Section 3, it turns out that   we have to substantially modify an argument  used in the proof of Theorem 7.3 in  \cite{GR}. This   constitutes  one of the delicate aspects of our  work and makes our   proof quite   different from that of the Laplacian as in \cite{Zhu1}   and  also from that  of the  Baouendi Grushin operators as in \cite{BG1}. 

\medskip

The  paper is organized as follows. In Section 2, we introduce some basic  notations, gather the relevant preliminary results from \cite{DG},   \cite{GR}, \cite{GLa} and  \cite{GV1} and state our main result. In Section 3, we establish  a monotonicity theorem for the generalized  weighted  Almgren type frequency that we introduce  and we then  subsequently prove our main result. 

\medskip

\textbf{Acknowledgment:} The author would like to thank  his former PhD. advisor Prof. Nicola Garofalo for introducing him to the very interesting subject of unique continuation and whose fundamental  work on this subject has been his constant inspiration. The author would also like to thank him for clarifying several results obtained in \cite{GR}.

\section{Preliminaries and Statement of main result}
\subsection{Preliminaries}
In this section, we state some preliminary results  that is relevant to our work and is similar to the one as in Section 2 in \cite{GR}. Henceforth in this paper we follow the notations adopted in \cite{GR} with a few exceptions. For most of the discussion in this section, one can find a detailed account in the book  \cite{BLU}. We recall that a Carnot group of step $h$ is a simply connected Lie group $\mathbb{G}$ whose lie algebra $\mathfrak{g}$ admits a stratification $\mathfrak{g}= V_1 \oplus ... \oplus V_h$ which is $h$ nilpotent., i.e., $[V_1, V_j]=V_{j+1}$ for  $j =1, ...h-1$ and  $[V_j, V_h]=0$ for $j =1, ...h$. A trivial example is when  $\mathbb{G}=\Rn$ and in which case $\mathfrak{g}=V_1=\Rn$. The simplest non-Abelian example of a Carnot group of step $2$ is the Heisenberg group $\mathbb{H}^{n}$, i.e. in $\R^{2n+1}$,  we let $(x, y, t)= (x_1, ...x_n, y_1, ...y_n, t)$ and  the group operation is as follows
\begin{equation}\notag
(x, y, t) \circ (x', y', t')= (x+x', y+y', t+ t'  + 2(x'.y- x.y') )
\end{equation}
In such a case, we have that  $V_1$ is spanned by 
\begin{align}\label{d1}
& X_i=\partial_{x_i} +2y_i \partial_t,\ i=1, .., n
\\
& Y_j= \partial_{y_j} - 2x_j \partial_t,\  j=1, .., n
\notag
\end{align}
and $V_2$ is spanned by $\partial_t$. 
We note  that the following holds,    
\begin{equation}\notag
[X_i, Y_j]= -4 \delta_{ij} \partial_t
\end{equation}
 and therefore $V_1$ generates the whole lie algebra.  We would like to mention that over here, we identify the lie algebra $\mathfrak{g}$ with the left invariant vector fields. 

\medskip

Now in  a Carnot group $\mathbb{G}$, by the above assumptions  on the Lie algebra, we see that any basis of the horizontal layer $V_1$ generates the whole $\mathfrak{g}$. We will respectively denote by
\begin{equation}
L_{g}(g')= gg', \\\ R_{g}(g')= g'g
\end{equation}
the left and right translation by an element $g \in \mathbb{G}$.

The exponential mapping $exp: \mathfrak{g} \to \mathbb{G}$ defines an analytic diffeomorphism onto $\mathbb{G}$. We recall the Baker-Campbell-Hausdorff formula, see for instance section 2.15 in \cite{V},
\begin{equation}
exp(c_1)exp(c_2)= exp(c_1 + c_2 + \frac{1}{2} [c_1, c_2] + \frac{1}{12} \{[c_1, [c_1, c_2]] - [c_2, [c_1, c_2]] \}+ .. )
	\end{equation}
	where the dots indicate commutators of order four and higher. Each element of the layer $V_j$ is assigned a  formal degree $j$. Accordingly, one defines dilations on $\mathfrak{g}$ by the rule 
	\begin{equation}
	\Delta_{\lambda} c= \lambda c_1 +..... \lambda^{h} c_h
	\end{equation}
	The anisotropic dilations $\delta_{\lambda}$ on $\mathbb{G}$ are then defined as 
	\begin{equation}\label{di}
	\delta_{\lambda}(g) = exp \circ \Delta_{\lambda} \circ exp^{-1} g
	\end{equation}
	Throughout the paper, we will indicate by $dg$ the bi-invariant Haar measure on $\mathbb{G}$ obtained by lifting via the exponential map exp the Lebesgue measure on $\mathfrak{g}$. Let $m_j= dim V_j$. One can check that
	\begin{equation}
	(d \circ \delta_{\lambda})(g) = \lambda^{Q}dg
	\end{equation}
	where
	$Q= \sum_{j=1}^{h} jm_j$. $Q$ is referred to as the homogeneous dimension of $\mathbb{G}$ and is in general different from the topological dimension of $\mathbb{G}$ which is $\sum_{j=1}^{h} m_j$. 
	
	\medskip
	
	We let $Z$ to be the smooth vector field which corresponds to the infinitesimal generator of the non-isotrophic dilations \eqref{di}. Note that $Z$ is characterized by the following property
	\begin{equation}
	\frac{d}{dr} u(\delta_{r} g) = \frac{1}{r} Zu(\delta_{r} g)
	\end{equation}
	Therefore if $u$ is homogeneous of degree $k$ with respect to \eqref{di}, i.e., $u(\delta_{r} g)= r^{k} u(g)$, then we have that $Zu= ku$.	
	\medskip
	
	Let $ \{ e_1, ...,e_m \}$ be an orthonormal basis of the first layer $V_1$ of the Lie algebra. We define the  corresponding left invariant smooth vector fields by the formula
	\begin{equation}
	X_{i}(g)=dL_{g}(e_i), \\\  i=1, ..., m
	\end{equation}	where $dL_{g} $ denote  the differential of $L_{g}$. We assume that $\mathbb{G}$ is endowed with a left-invariant Riemannian metric such that $\{X_1, ...., X_m \}$ are orthonormal.  We note that in this case, the bi-invariant Haar measure $dg$ agrees with the Riemannian volume element ( see for instance \cite{BLU}). The corresponding subLaplacian is defined by the formula
	
	\begin{equation}\label{Def}
	\Delta_{H} u = \sum_{i=1}^m X_{i}^2 u
	\end{equation}
	We note that by Hormander's theorem, $\Delta_{H}$ is hypoelliptic. We with  indicate with $e$ the identity element of $\mathbb{G}$. 
	
	\medskip
	
	Let $\Gamma (g, g')= \Gamma (g', g)$ be the positive fundamental solution of $-\Delta_{H}$. It turns out that $\Gamma$ is left invariant, i.e., 
	\begin{equation}
	\Gamma (g, g')= \tilde{\Gamma}(g^{-1} \circ g')
	\end{equation}
	
	For every $r>0$, let 
	\begin{equation}
	B_{r}= \{ g \in \mathbb{G} | \Gamma (g, e) > \frac{1}{r^{Q-2}} \}
	\end{equation}
	It was proved by Folland  \cite{F1} that $\tilde{\Gamma}(g)$ is homogeneous of order $2-Q$ with respect to the non-isotrophic dilations \eqref{di}. Therefore, if we define
	\begin{equation}
	\rho(g)= \Gamma(g)^{\frac{-1}{Q-2}}
	\end{equation}
	then $\rho$ is homogenous of degree 1.  One can immediately see that $B_r$ can be equivalently characterized as 
	\begin{equation}
	B_{r}= \{ g:\rho(g) < r \}
	\end{equation}
	
	We let $S_{r}= \partial B_r$. We note that since $\Gamma$ is homogeneous of degree $2-Q$, therefore
	\begin{equation}\label{gm}
	Z\Gamma= (2-Q) \Gamma
	\end{equation}
	Now by the  strong maximum principle (Since $\Gamma(g, e)$ is harmonic for $g \neq e$), we have that $\Gamma(g, e) > 0$ for all $ g \neq e$. Now since $Z \Gamma= <D\Gamma, Z>$, where $D\Gamma$ is the Riemannian gradient with respect to the left invariant metric, we conclude from \eqref{gm} that $D\Gamma$ never vanishes. Therefore, by implicit function theorem, we conclude that the level sets $S_{r}$ are smooth hypersurfaces in $\mathbb{G}$.
	
	 The position $d(g, g')$ defined by
	\begin{equation}
	d(g, g')= \rho(g^{-1} \circ g')
	\end{equation}
	defines a pseudo-distance on $\mathbb{G}$.    In what follows, we denote by 
	\begin{equation}
	\nabla_{H}u= \sum_{i=1}^m X_i u X_i
	\end{equation}	
	the horizontal gradient of $u$. We also let
	\begin{equation}
	|\nabla_{H}u|^2= \sum_{i=1}^m (X_i u)^2
	\end{equation}	
	
	\subsection{Statement of the main result}
	In order to state our main result, we first describe our  framework.  We will assume that $u$ is   a solution to 
	\begin{equation}\label{E0}
	\Delta_{H} u  = Vu
	\end{equation}
	in $B_1$. Since the regularity issues are not our main concern, we will assume  apriori that  $u, X_{i}u, X_{i}X_{j} u, Zu$   are in $L^{2}(B_1)$ with respect to the Haar measure $dg$.    Concerning the potential $V$, we assume that  it  satisfies 
	\begin{equation}\label{vasump}
|V| \leq K  |\nabla_{H} \rho|^2 \ \ \ \ \ |ZV| \leq K |\nabla_{H} \rho|^2
\end{equation}	
for some $K>1$.  An example of  a smooth $V$ which satisfies \eqref{vasump} is given by $ V(g) = \tilde{V}(g) f(\rho|\nabla_{H} \rho|^2)$ where $\tilde{V}$ is a smooth function defined on $\mathbb{G}$ and $f:\R \to \R$ is a smooth compactly  supported function which vanishes in a neighborhood of $0$. The first condition in \eqref{vasump} is easy to see and for the second condition we note that
\begin{equation}\label{above}
|ZV| \leq |(Z\tilde{V}) f(\rho |\nabla_{H} \rho|^2)| + |\tilde{V} f'( \rho|\nabla_{H} \rho|^2) Z\rho |\nabla_{H}\rho|^2|  + |\tilde{V} f'(\rho |\nabla_{H} \rho|^2) \rho Z(|\nabla_{H} \rho|^2)|
\end{equation}
Now since $\rho$ has homogeneity $1$, $Z\rho=\rho$ and since $|\nabla_{H} \rho|^2$ has homogeneity 0 being the derivative of 1 homogenous function, we have that $Z(|\nabla_{H}\rho|^2)=0$ for $g \neq e$. We note that the derivative in \eqref{above} is only computed for $g \neq e$ since $f$ vanishes in a neighborhood of $0$.  Therefore we obtain
\begin{equation}\label{obt}
|ZV| \leq |(Z\tilde{V}) f(\rho |\nabla_{H} \rho|^2)| + |\tilde{V} f'( \rho|\nabla_{H} \rho|^2) \rho |\nabla_{H}\rho|^2|
\end{equation}

 From \eqref{obt}, it is easy to see  that the second condition in \eqref{vasump} is satisfied for this choice of $V$. 

As in \cite{GR}, we define the discrepancy $E_{u}$  at $e$ by
\begin{equation}\label{dis}
E_{u}= < \nabla_{H} u, \nabla_{H} \rho> - \frac{Zu}{\rho} |\nabla_{H}  \rho|^2
\end{equation}
Like in \cite{GR}, we will also assume that 
\begin{equation}\label{dis1}
|E_u| \leq \frac{f(\rho)}{\rho} |\nabla_{H} \rho|^2 |u|
\end{equation}
where $f: (0, 1) \to (0, \infty)$ is a continuous  increasing function which satisfies the Dini integrability condition
\begin{equation}\label{Dini}
\int_{0}^1 \frac{f(t)}{t} dt <  K_0, \\\  |f| \leq K_1
\end{equation}
We now    list a few classes of examples from \cite{GR} in which the assumption \eqref{dis1} holds.

\medskip

In the case when $\mathbb{G}= \R^n$, we have that $Z= \Sigma_{i=1}^n x_i \partial_i$ and $\nabla_{H} \rho= \frac{x}{|x|}$. Therefore,  we clearly  see that $E_u \equiv 0$ in this case. 

\medskip

 For a general  Carnot group $\mathbb{G}$, when  $u$ is radial, i.e., if    $u(g)= f( \rho(g))$, then  it follows from a  straightforward calculation (See Proposition 9.6 in \cite{GR}) that $E_u \equiv 0$.   

\medskip

Also, if we specialize to the case when  $\mathbb{G} = \mathbb{H}^n$ and  assume  $u$ to be  polyradial, i.e. with $g=( w_1,...., w_n, t)$ where $ w_{i}= (x_i, y_i)$, we have that $u(g)= \phi (|w_1|, ..., |w_n|, t)$, then $E_u \equiv 0$. ( see Proposition 9.11 in \cite{GR}).  It is however not true that for general groups of Heisenberg type ( see Section 9.1 in \cite{GR} for the precise definition of groups of Heisenberg type), polyradial functions have zero discrepancy. ( see Section 9 in \cite{GR} for a counterexample) Nevertheless,  given these   examples, we would like to think of the growth condition \eqref{dis1} on $E_u$  as the measure of a certain symmetry type property of $u$. 

\medskip

We now  state our main result.

\begin{thrm}\label{main}
Let $u$  be a solution to \eqref{E0} in $B_1$  such that $|u| \leq C_0$ and the discrepancy $E_u$ of $u$ at $e$  satisfies \eqref{dis1}. Let $V$ satisfy \eqref{vasump}. Then there exists a universal $a \in (0, 1/3)$, and constants $C_1, C_2$ depending on $Q, C_0$  and  $K_0, K_1$ in \eqref{Dini}  and  also $\int_{B_{1/3} }u^2 |\nabla_{H} \rho|^2  dg$ such that for all $0 <r  < a$, one has
\begin{equation}\label{est}
||u||_{L^{\infty}(B_r)} \geq C_1 r^{C_2 \sqrt{K}}
\end{equation}
\end{thrm}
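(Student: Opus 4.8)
The strategy follows the frequency-function approach of Garofalo–Lin as adapted by Zhu \cite{Zhu1} and in \cite{BG1}, but carried out with the weighted Almgren height/energy functionals tailored to the Carnot-group structure. First I would introduce the weighted height function
\[
H(r) = \int_{B_r} u^2 \, |\nabla_H \rho|^2 \, (r^2 - \rho^2)^\alpha \, dg, \qquad \alpha > -1,
\]
(for a suitably chosen $\alpha$, e.g.\ $\alpha = 1$), together with the companion Dirichlet-type energy $D(r)$ obtained by integrating $|\nabla_H u|^2$ against an analogous weight, and the generalized frequency $N(r) = r D(r)/H(r)$. Using the equation \eqref{E0}, the identities $Z\rho = \rho$, $Z(|\nabla_H\rho|^2) = 0$, the homogeneity of $\rho$, and the co-area representation along the level sets $S_r$ (all available from Section 2 and \cite{GR}), I would differentiate $\log H(r)$ and express $H'(r)$ in terms of $D(r)$ plus error terms controlled by the potential bound $|V| \le K|\nabla_H\rho|^2$ and by the discrepancy bound \eqref{dis1}. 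This is where the Rellich–Pohozaev / Caccioppoli-type manipulations with the vector field $Z$ enter, exactly as in the proof of Theorem 7.3 of \cite{GR}, except that I must retain explicit dependence on $K$ and on $\int_{B_{1/3}} u^2 |\nabla_H\rho|^2 \, dg$ rather than merely asserting boundedness.

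The core analytic step is the \textbf{monotonicity theorem} (Theorem \ref{mon}, invoked as a black box): after the differentiation computations one obtains a differential inequality of the form
\[
\frac{d}{dr} \log\!\big(e^{c_1 \Phi(r)}(N(r) + c_2 \sqrt{K})\big) \ge 0
\]
on $(0,a)$, where $\Phi(r)$ is a primitive of the Dini density $f(t)/t$, so $\Phi$ stays bounded by $K_0$, and $c_1, c_2$ are universal. From this one reads off that $N(r) + c_2\sqrt K \le C(N(a) + c_2\sqrt K)$, and then — crucially — that $N(a)$ itself is bounded by a universal constant times $1 + \sqrt K$, using the a priori normalization $|u| \le C_0$ together with the lower bound on $\int_{B_{1/3}} u^2|\nabla_H\rho|^2\,dg$ to control $H$ and $D$ on a fixed annulus. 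This gives $N(r) \le C_2' \sqrt K$ for all $r < a$ (absorbing the $1$ into $\sqrt K$ since $K > 1$).

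Next I would convert the frequency bound into a growth estimate for $H$: since $(\log H)'(r) = \frac{2N(r)}{r} + (\text{bounded Dini error})$, integrating from $r$ to $a$ yields
\[
H(a) \le C \, \Big(\frac{a}{r}\Big)^{2C_2'\sqrt K} H(r),
\]
i.e.\ $H(r) \ge c \, r^{2C_2'\sqrt K}$ with $c$ depending on $H(a)$, hence on the listed quantities. Finally, a standard doubling/elliptic-estimate argument — using $|\nabla_H\rho|^2$ boundedness, the subelliptic Caccioppoli inequality, and the $L^\infty$ bound $|u|\le C_0$ — upgrades the $L^2$-with-weight lower bound on $H(r)$ to the sup bound $\|u\|_{L^\infty(B_r)} \ge C_1 r^{C_2\sqrt K}$, after slightly enlarging $C_2$ to absorb the polynomial loss from the radius comparison between $B_r$ and $B_{2r}$.

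The main obstacle I anticipate is precisely the one the author flags: controlling the $K$-dependence through the differentiation of the frequency. In \cite{GR} one only needed qualitative boundedness, so error terms of the form $\int V u^2 (\cdots)$ could be crudely absorbed; here each such term must be tracked as $O(K)$ or $O(\sqrt K)$ after a Cauchy–Schwarz/Young splitting, and the interplay between the potential error ($\sim K H$), the discrepancy error ($\sim f(r) D$ and $\sim f(r) H/r$), and the main energy term must be balanced so that the "bad" constant enters the final differential inequality only through the additive shift $c_2\sqrt K$ and not through the coefficient $c_1$ of $\Phi$. Getting that balance right — and showing the modified argument replacing the relevant step of Theorem 7.3 of \cite{GR} still closes — is the delicate part; the rest is bookkeeping along now-classical lines.
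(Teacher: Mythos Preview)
Your outline misses the mechanism by which the sharp exponent $\sqrt{K}$ (rather than $K$) is obtained, and with your stated choices the argument would only yield $r^{CK}$.

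The paper does \emph{not} take $\alpha$ to be a fixed constant like $1$; it takes $\alpha=\sqrt{K}$, and this choice is the whole point. With that choice the monotonicity theorem (Theorem~\ref{mon}) gives a monotone quantity of the form
\[
e^{\overline{C}\int_0^r f(t)/t\,dt}\Bigl(N(r)+\overline{C}K\bigl(r^2+\textstyle\int_0^r f(t)/t\,dt\bigr)\Bigr),
\]
so the additive shift is of order $K$, not $\sqrt{K}$, and consequently the frequency bound one extracts is $N(r)\le \tilde C_1\bigl(N(s)+\tilde C_2 K\bigr)$, i.e.\ $N(r)=O(K)$. The $\sqrt{K}$ then arises in the second step: from \eqref{v} one has
\[
(\log H)'(r)=\frac{2\alpha+Q}{r}+\frac{N(r)}{(\alpha+1)\,r}+O\!\Bigl(\frac{f(r)}{r}\Bigr),
\]
so with $\alpha=\sqrt{K}$ the first term contributes $\sim \sqrt{K}/r$ and the second contributes $N(r)/((\alpha+1)r)\lesssim K/(\sqrt{K}\,r)=\sqrt{K}/r$. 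Integrating from $r$ to a fixed scale gives $H(r)\ge c\,r^{C\sqrt{K}}$, and the passage to the $L^\infty$ bound is as you indicate.

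By contrast, your formula $(\log H)'(r)=\tfrac{2N(r)}{r}+\text{error}$ is only correct for the unweighted Almgren functional ($\alpha=0$); for general $\alpha$ the factor $1/(\alpha+1)$ and the $2\alpha/r$ term are essential. With $\alpha=1$ there is no way to turn an $O(K)$ bound on $N$ into a $\sqrt{K}$ exponent, and there is no reason to expect the differential inequality for $N'$ to close with a shift of only $\sqrt{K}$: the potential terms $\int Vu^2(r^2-\rho^2)^{\alpha+1}$ produce errors of size $KrH(r)$ (see \eqref{i13}, \eqref{j13}), which force the additive shift in the monotone quantity to be of order $K$. The balancing you describe in your last paragraph is therefore aimed at the wrong target; the correct balance is between $\alpha$ and $K/\alpha$, achieved at $\alpha=\sqrt{K}$.
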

\begin{rmrk}
We note that in the elliptic case as in \cite{BG2}, \cite{Zhu1}, we have that $|\nabla_{H} \rho|^2 \equiv 1$ and in such a case, the constant $K$ in \eqref{vasump} can be taken to be $C(||V||_{W^{1, \infty} } + 1)$ for some universal $C$. We  thus see that   in the elliptic case, 
\eqref{est} reduces to the Euclidean result as in \cite{Bk} and \cite{Zhu1} since $E_u \equiv 0$ in the Euclidean case. Therefore   our estimate \eqref{est} gives  sharp  bounds on  the vanishing order  of $u$ at the identity $e$ in terms of  a  certain "subelliptic"  $C^{1}$ norm of $V$  with bounds as  in \eqref{vasump}.  Hence, our result can be  thought of as a subelliptic analogue of the sharp quantitative uniqueness result  in the Euclidean case. 
\end{rmrk}
\begin{rmrk}
It is worth mentioning the case when  $\mathbb{G} = \mathbb{H}^n$ and $E_u \equiv 0$.  Now  from the definition of the sublaplacian and the explicit  representation  of the horizontal  vector fields  for $\mathbb{H}^{n}$  as in \eqref{d1},  we have that $u$ solves the following equation 
\begin{equation}\label{Car}
\Delta_{z} u + \frac{|z|^2}{4} \partial_{tt} u  + \partial_t  \theta_{u} = Vu
\end{equation}
where $\theta_u = \Sigma (x_j \partial_{y_j}u - y_j \partial_{x_j}u)$ and $E_u= \frac{4}{\rho^3} t\theta_u$ ( see for instance Lemma 9.8 in \cite{GR}). Now when  $E_u \equiv 0$, we get that $\theta_u \equiv 0$ and hence $u$ solves the stationary Schr\"odinger equation  corresponding to the  Baouendi-Grushin operator 
\begin{equation}
\Delta_{z} u + \frac{|z|^2}{4} \partial_{tt} u = Vu
\end{equation}
for which the  sharp quantitative estimate \eqref{est} follows from Theorem 1.1 in \cite{BG1}.  

\medskip

   However  if we only assume that $E_u$  satisfies \eqref{dis1}, then   from  \eqref{dis1} and the fact that $E_u= \frac{4}{\rho^3} t \theta_u$ we  can only assert that $u$ solves  \eqref{Car} where $\theta_u$ satisfies  the following growth condition, 
\begin{equation}
|\theta_u| \leq \frac{ f(\rho) \rho^2 |\nabla_{H} \rho|^2 |u|}{4 t}
\end{equation}
and  in this case,  our result  is not implied by \cite{BG1}. Therefore  our result  is new even for $\mathbb{G}=\mathbb{H}^{n}$.  

\end{rmrk}

\begin{rmrk} 
It remains to be  seen  when the potential $V$ only  satisfies
\begin{equation}
|V| \leq K |\nabla_{H} \rho|^2
\end{equation}
instead of \eqref{vasump}, then if  it can be shown that   the vanishing order of the solution $u$  is bounded from above  by $C K^{2/3}$. This  would constitute  the subelliptic  analogue of the result in \cite{K} to which we would  like to  come back in a future  study.
\end{rmrk}

\section{Proof of Theorem \ref{main}}
\subsection{Monotonicity of a generalized frequency}
Following \cite{Zhu1} and \cite{BG1}, for $\alpha > 0$ to be decided later,  we let
\begin{equation}\label{h}
H(r)= \int_{B_r} u^2 |\nabla_{H} \rho|^2 (r^2 - \rho^2)^{\alpha} dg
\end{equation}
For notational convenience, we will let $|\nabla_{H} \rho|^2= \psi$.
Therefore with this new notation, we have that 
\begin{equation}
H(r)= \int_{B_r} u^2 (r^2 - \rho^2)^{\alpha} \psi
\end{equation}

By differentiating with respect to $r$, we get that
\begin{equation}
H'(r) = 2 \alpha r \int u^2 (r^2 - \rho^2)^{\alpha -1} \psi
\end{equation}
Using the identity
\begin{equation}
(r^2 - \rho^2)^{\alpha -1} = \frac{1}{r^2} (r^2 - \rho^2)^{\alpha} + \frac{\rho^2}{r^2} (r^2 - \rho^2)^{\alpha -1}
\end{equation}
the latter equation can be rewritten as
\begin{equation}
H'(r) = \frac{2 \alpha}{r} H(r) + \frac{2 \alpha} {r} \int u^2 (r^2 - \rho^2)^{\alpha-1} \rho^2 \psi
\end{equation}
Now by using the fact that  $Z \rho = \rho$, we see that  $(r^2 - \rho^2)^{\alpha-1} \rho^2$ can be rewritten as 
\begin{equation}
(r^2 - \rho^2)^{\alpha-1} \rho^2= -\frac{1}{2 \alpha} Z(r^2- \rho^2)^{\alpha}
\end{equation}
 Therefore we get that
\begin{equation}
H'(r)= \frac{2 \alpha}{r} - \frac{1}{r} \int u^2 Z(r^2-\rho^2)^{\alpha} \psi
\end{equation}
Now we note that the following  two identity holds
\begin{equation}\label{l1}
 Z( |\nabla_{H} \rho|^2)=0,\ g\neq e
 \end{equation}
  and
\begin{equation}\label{l2}
div_{\mathbb{G}} Z = Q
\end{equation}

For \eqref{l2},  for instance the reader can refer to \cite{DG}. Note that over here, $div_{\mathbb{G}}$ denotes the Riemmanian divergence on $\mathbb{G}$.  Now by using the Divergence theorem on $\mathbb{G}$ with respect to its Riemmanian structure and  also by using \eqref{l1}, \eqref{l2}, we get that
\begin{equation}\label{v1}
H'(r)= \frac{2 \alpha + Q}{r} H(r) + \frac{2}{r} \int u Zu (r^2 - \rho^2)^{\alpha} \psi 
\end{equation}
Over here, we  crucially use the fact that since $|\nabla_{H} \rho|^2$ has homogeneity $0$, therefore it is bounded and hence the integration by parts can be justified by an approximation type argument. Now by using \eqref{dis1}, we get that
\begin{equation}\label{h3}
H'(r) = \frac{2 \alpha + Q}{r} H(r)  + \frac{2} {r} \int u \rho < \nabla_{H} u, \nabla_{H} \rho> (r^2 - \rho^2)^{\alpha}  +K(r)
\end{equation}
where
\begin{equation}
|K(r)| \leq \frac{f(r)}{r} H(r)
\end{equation}
\eqref{h3} can hence  be rewritten as 
\begin{equation}\label{v}
H'(r)= \frac{2 \alpha+ Q}{r} H(r) + \frac{1}{(\alpha+1)r} I(r) + K(r)
\end{equation}
where 
\begin{equation}\label{i}
I(r)= 2 (\alpha +1 ) \int u < \nabla_{H} u, \nabla_{H} \rho> (r^2- \rho^2)^{\alpha} \rho= -\int u <\nabla_{H} u, \nabla_{H} (r^2 - \rho^2)^{\alpha+1}>
\end{equation}
Now we note that the following identity holds( see for instance \cite{GV1})
\begin{equation}\label{df}
div_{\mathbb{G}}X_i=0
\end{equation}

Therefore, by  applying  integrating by parts to \eqref{i} and by using the equation \eqref{E0} and the identity \eqref{df} we get that,
\begin{equation}\label{i1}
I(r) = \int |\nabla_{H} u|^2 (r^2 - \rho^2)^{\alpha+1} + Vu^2 (r^2-\rho^2)^{\alpha+1}
\end{equation}
 
We now  define the generalized  frequency of $u$ as 
\begin{equation}\label{freq}
N(r)= \frac{I(r)}{H(r)}
\end{equation}
The central result of this section which implies our main estimate \eqref{est} in Theorem \ref{main} is the following monotonicity result of $N(r)$.
\begin{thrm}\label{mon}
For $\alpha = \sqrt{K}$, we have that there  exists universal $\overline{C}$ depending on  $Q, K_0, K_1$ such that
\begin{equation}\label{m1}
r \to e^{\overline{C} \int_{0}^r \frac{f(t)}{t} }(N(r) + \overline{C} K ( r^2 +  \int_{0}^r \frac{f(t)}{t} dt))
\end{equation}
is monotone increasing on $(0, 1)$.
\end{thrm}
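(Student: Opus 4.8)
The plan is to compute $N'(r)$ and show that, after multiplication by the integrating factor $e^{\overline{C}\int_0^r f(t)/t\,dt}$ and the addition of the correction term $\overline{C}K(r^2 + \int_0^r f(t)/t\,dt)$, the resulting expression is monotone. The starting point is the logarithmic derivative
\[
\frac{N'(r)}{N(r)} = \frac{I'(r)}{I(r)} - \frac{H'(r)}{H(r)}.
\]
For $H'(r)/H(r)$ we already have the clean formula \eqref{v}, which gives
\[
\frac{H'(r)}{H(r)} = \frac{2\alpha + Q}{r} + \frac{N(r)}{(\alpha+1)r} + \frac{K(r)}{H(r)},
\]
with $|K(r)/H(r)| \le f(r)/r$. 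So the real work is to differentiate $I(r)$ as given in \eqref{i1}, i.e.\ $I(r) = \int |\nabla_H u|^2 (r^2-\rho^2)^{\alpha+1} + \int V u^2 (r^2-\rho^2)^{\alpha+1}$. First I would differentiate under the integral sign, producing a factor $2(\alpha+1)r(r^2-\rho^2)^\alpha$, then use $Z\rho = \rho$ to trade the explicit $r$-weight for $-\tfrac{1}{2(\alpha+1)}Z(r^2-\rho^2)^{\alpha+1}$ and integrate by parts against $Z$, exactly mirroring the computation done for $H$. This produces the Rellich-type identity: a multiple of $\tfrac{1}{r}I(r)$, plus a boundary/bulk term involving $Z(|\nabla_H u|^2)$ and $Z(Vu^2)$, plus divergence-of-$Z$ contributions. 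The term $Z(|\nabla_H u|^2)$ must be handled by the sub-elliptic Rellich identity from \cite{GR} (this is where the commutator structure of the $X_i$ enters), and $ZV$ is controlled by the second hypothesis in \eqref{vasump}, namely $|ZV|\le K\psi = K\alpha^{-2}\alpha^2\psi$, which is precisely why the choice $\alpha = \sqrt{K}$ is forced: it balances the potential term against the frequency.

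The key algebraic target is an inequality of the shape
\[
\frac{I'(r)}{I(r)} \ge \frac{2\alpha + Q + 2}{r} + \frac{2N(r)}{(\alpha+1)r} - \frac{\overline{C}f(r)}{r} - \overline{C}Kr - \overline{C}\,(\text{something controlled by }N(r)/r\text{ and the potential}),
\]
after which subtracting $H'(r)/H(r)$ makes the leading $\tfrac{1}{r}$-terms and the $\tfrac{N(r)}{(\alpha+1)r}$-terms combine to give something $\ge -\overline{C}f(r)/r - \overline{C}Kr$ times $N(r)$, up to lower-order. Concretely I expect to arrive at
\[
N'(r) \ge -\frac{\overline{C}f(r)}{r}\bigl(N(r) + \overline{C}K\bigr) - \overline{C}Kr,
\]
which is exactly the differential inequality whose solution is the claimed monotone quantity: if $G(r) = N(r) + \overline{C}K(r^2 + \int_0^r f(t)/t\,dt)$ then $G'(r) \ge -\overline{C}\tfrac{f(r)}{r}G(r)$, so $e^{\overline{C}\int_0^r f(t)/t\,dt}G(r)$ is nondecreasing. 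The Dini condition \eqref{Dini} guarantees the integrating factor is finite on $(0,1)$, and $|f|\le K_1$ keeps all the error terms genuinely lower order.

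The main obstacle, as the author flags in the introduction, is the non-negativity (or near-non-negativity) of the bulk term coming from $\int u Z(\nabla_H u \cdot \nabla_H\rho)\,(\cdots)$-type expressions after integration by parts — in the Euclidean/Grushin case one gets a perfect square ($\int |Zu - c\,r\,\partial_r u|^2$ or similar) essentially for free, but on a general Carnot group the commutators $[X_i, Z]$ and the failure of $\nabla_H\rho$ to be as rigid as $x/|x|$ generate extra terms that are not manifestly signed. Controlling these requires the discrepancy hypothesis \eqref{dis1}: the "bad" cross terms are exactly of the form $\int u\,E_u(\cdots)$, and \eqref{dis1} bounds them by $\tfrac{f(\rho)}{\rho}\psi|u|^2$, which integrates to a multiple of $\tfrac{f(r)}{r}H(r)$ — hence the $f(r)/r$ in the differential inequality. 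Getting the constants right so that everything is absorbed into a single $\overline{C}$ depending only on $Q, K_0, K_1$ (and crucially \emph{not} on $K$ after the rescaling $\alpha = \sqrt{K}$) is the delicate bookkeeping the author promises is "novel work"; I would handle it by keeping $\alpha$ symbolic until the very end, tracking every appearance of $K = \alpha^2$, and only then verifying that the $K$-dependence collapses to the stated form. I would also need the elementary two-sided bound $N(r) \gtrsim -1$ (which follows from \eqref{i1} together with $|V|\le K\psi$ and a Caccioppoli-type estimate) to make sense of dividing by $N(r)$ and to close the argument — without a lower bound on $N$, the logarithmic-derivative manipulation is not licensed.
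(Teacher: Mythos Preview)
Your overall framework (differentiate $I$, apply the Rellich identity with $F=(r^2-\rho^2)^{\alpha+1}Z$, control $ZV$ via \eqref{vasump}, aim for $N'(r)\ge -\overline C\frac{f(r)}{r}N(r)-\overline C K r-\overline C K\frac{f(r)}{r}$) matches the paper. But there is a genuine gap in how you handle the discrepancy error, and it is exactly the step the paper flags as the delicate one.

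You write that the bad cross terms ``are exactly of the form $\int u\,E_u(\cdots)$'' and hence integrate to a multiple of $\frac{f(r)}{r}H(r)$. That is true in the $H'$ computation, but \emph{not} in the $I'$ computation. After the Rellich identity one meets $\frac{4(\alpha+1)}{r}\int \langle\nabla_H u,\nabla_H\rho\rangle\,\rho\,Zu\,(r^2-\rho^2)^\alpha$; substituting $\langle\nabla_H u,\nabla_H\rho\rangle\rho = Zu\,\psi+\rho E_u$ produces the desired $\frac{4(\alpha+1)}{r}\int (Zu)^2\psi$ plus an error $K_1(r)$ of size $\frac{4(\alpha+1)f(r)}{r}\int |u|\,|Zu|\,\psi\,(r^2-\rho^2)^\alpha$. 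This is \emph{not} bounded by $\frac{f(r)}{r}H(r)$: it carries the factor $(\alpha+1)\sim\sqrt K$ and involves $\int|u||Zu|\psi$ rather than $\int u^2\psi$. A naive Cauchy--Schwarz absorption into the positive $(Zu)^2$ term fails, because after absorption the remaining positive piece $\frac{2(\alpha+1)}{r}\int(Zu)^2\psi/H$ no longer dominates the negative $\frac{4(\alpha+1)}{r}(\int uZu\psi)^2/H^2$ coming from $H'$. The paper resolves this with a dichotomy: either $\sqrt{H}\sqrt{\int(Zu)^2\psi}\le \sqrt 2\bigl(\int uZu\psi+8\sqrt K r^2 H+f(r)H\bigr)$, in which case $\int|u||Zu|\psi$ is tied to $I(r)$ via \eqref{in2} and the error becomes $\frac{f(r)}{r}N(r)+K\frac{f(r)}{r}$; or the opposite inequality holds, which gives enough extra room in $\int(Zu)^2\psi$ to both absorb the error and kill a dangerous $-16K^{3/2}r^3$ term that appears in the subcase $\int uZu\psi\le 0$. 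This $K^{3/2}$ obstruction (power of $K$ exceeding $1$) is precisely what would prevent your differential inequality from closing with the sharp $\sqrt K$ exponent, and you do not address it.

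A smaller point: do not start from $N'/N=I'/I-H'/H$. Since $I(r)$ (and hence $N(r)$) can be negative when $V$ is, dividing by $I$ is not licensed; a lower bound $N\gtrsim -Kr^2$ does not help here. The paper works directly with $N'=I'/H-(H'/H)N$ via the quotient rule, and so should you.
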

\begin{proof}
The proof will be divided into several steps. We first calculate $I'(r)$.
By differentiating the expression in \eqref{i1} with respect to $r$, we get that
\begin{equation}\label{m}
I'(r)= 2(\alpha+1)r \int |\nabla_{H} u|^2 (r^2 - \rho^2)^{\alpha} + 2(\alpha+1)r \int Vu^2 (r^2-\rho^2)^{\alpha}
\end{equation}
This can be rewritten as
\begin{equation}\label{i'}
I'(r) = \frac{2(\alpha+1)}{r} \int |\nabla_{H} u|^2 (r^2 - \rho^2)^{\alpha+1}  + \frac{2(\alpha+1)}{r} \int |\nabla_{H} u|^2 (r^2 - \rho^2)^{\alpha} \rho^2 + 2(\alpha+1)r \int Vu^2 (r^2-\rho^2)^{\alpha}
\end{equation}
Using the fact that $Z\rho=\rho$,  the second term  on the right hand side  of above expression can be rewritten as
\begin{equation}
\frac{2(\alpha+1)}{r} \int |\nabla_{H} u|^2 (r^2 - \rho^2)^{\alpha} \rho^2 = -\frac{1}{r} \int |\nabla_{H} u|^2 Z(r^2 - \rho^2)^{\alpha+1}
\end{equation}
At this point, we need the following Rellich type identity  which corresponds to   Corollary 3.3 in \cite{GV1}. This can can be thought of as the sub-elliptic analogue of Rellich type identity established in \cite{PW}. For a $C^{1}$ vector  field $F$,  we have that 
 \begin{align}\label{re}
&  2 \int_{\partial B_r}Fu < \nabla_{H}u, N_{H}>  dH^{n-1} + \int_{B_r} div_{\mathbb{G}} F |\nabla_{H} u|^2 dg
\\
& -  2\int_{B_r} X_i u [X_i, F]u dg   - 2 \int_{B_r} Fu \Delta_{\mathbb{H}} u dg
\notag\\
& = \int_{\partial B_r} |\nabla_{H} u|^2 < F, \nu> dH^{n-1}
\notag
\end{align}
We now apply the identity \eqref{re} to the vector field $ F= (r^2- \rho^2)^{\alpha+1} Z$. We note that the boundary terms don't appear due to the presence of the weight $(r^2- \rho^2)^{\alpha+1}$. Therefore we get,

\begin{equation}
-\frac{1}{r} \int |\nabla_{H} u|^2 Z(r^2 - \rho^2)^{\alpha+1}
= -\frac{1}{r} \int |\nabla_{H} u|^2 div_{\mathbb{G}} (F) + \frac{Q}{r} \int |\nabla_{H} u|^2 (r^2 - \rho^2)^{\alpha+1}
\end{equation}
where we used the fact that $div_{\mathbb{G}} Z=Q$. Now by applying \eqref{re}, we get that
\begin{equation}\label{i10}
-\frac{1}{r} \int |\nabla_{H} u|^2 Z(r^2 - \rho^2)^{\alpha+1}
= -\frac{2}{r} \int X_i u [X_i, F] u dg - \frac{2}{r} \int Fu \Delta_{H} u dg + \frac{Q}{r} \int |\nabla_{H} u|^2 (r^2 - \rho^2)^{\alpha+1}
\end{equation}
At this point, we  note that the following  identity holds ( See for instance \cite{DG})
\begin{equation}\label{li}
[X_i, Z] = X_i
\end{equation}
 Therefore by using \eqref{li},  we have 
\begin{equation}\label{li1}
[X_i, F]u = X_{i}(r^2- \rho^2)^{\alpha +1 } Z + (r^2 - \rho^2)^{\alpha+1} X_i= -2(\alpha+1) \rho (r^2 - \rho^2)^{\alpha} X_i \rho Z + (r^2 - \rho^2)^{\alpha+1} X_i
\end{equation}
By using \eqref{li1} in \eqref{i10} we get that,
\begin{align}
&- \frac{1}{r} \int |\nabla_{H} u|^2 Z(r^2 - \rho^2)^{\alpha+1} = \frac{4(\alpha+1)}{r} \int < \nabla_{H} u, \nabla_{H} \rho> \rho Zu (r^2 - \rho^2)^{\alpha} 
\\
& - \frac{2}{r}  \int Vu Zu  (r^2 - \rho^2)^{\alpha+1}  +  \frac{Q-2}{r} \int |\nabla_{H} u|^2 (r^2 - \rho^2)^{\alpha+1}
\notag
\end{align}
Now by using  the  growth assumption \eqref{dis1}  on the discrepancy $E_u$ we get that
\begin{align}
&- \frac{1}{r} \int |\nabla_{H} u|^2 Z(r^2 - \rho^2)^{\alpha+1} = \frac{4(\alpha+1)}{r} \int  (Zu)^2 (r^2 - \rho^2)^{\alpha} \psi + \frac{Q-2}{r} \int |\nabla_{H} u|^2 (r^2 - \rho^2)^{\alpha+1}
\\
& - \frac{2}{r}  \int Vu Zu  (r^2 - \rho^2)^{\alpha+1}  + K_1(r)
\notag
\end{align}
where 
\begin{equation}
|K_1 (r)| \leq \frac{4(\alpha+1) f(r)}{r}  \int (r^2 - \rho^2)^{\alpha} |u| |Zu| \psi  
\end{equation} 
Therefore by substituting the above expression in  \eqref{i'} we get that,
\begin{align}
& I'(r)= \frac{2\alpha +Q }{r} \int |\nabla_{H} u|^2 (r^2 - \rho^2)^{\alpha+1}  + \frac{4(\alpha+1)}{r} \int  (Zu)^2 (r^2 - \rho^2)^{\alpha} \psi 
\\
&  + 2(\alpha+1)r \int Vu^2 (r^2-\rho^2)^{\alpha} - \frac{2}{r}  \int Vu Zu  (r^2 - \rho^2)^{\alpha+1}  + K_1(r)
\notag
\end{align}
Recalling the definition of $I(r)$, we  can   rewrite $I'$ as 
\begin{align}\label{i11}
& I'(r) = \frac{2 \alpha + Q}{r} I(r) - \frac{2 \alpha + Q}{r} \int Vu^2 (r^2-\rho^2)^{\alpha+1} +  \frac{4(\alpha+1)}{r} \int  (Zu)^2 (r^2 - \rho^2)^{\alpha} \psi 
\\
&  + 2(\alpha+1)r \int Vu^2 (r^2-\rho^2)^{\alpha} - \frac{2}{r}  \int Vu Zu  (r^2 - \rho^2)^{\alpha+1}  + K_1(r)
\notag
\end{align}
Now by integrating by parts and again by using the fact that $div_{\mathbb{G}} Z=Q$, we get that
\begin{align}\label{i12}
&  - \frac{2}{r}  \int Vu Zu  (r^2 - \rho^2)^{\alpha+1} = -\frac{1}{r} \int Z(u^2) V (r^2 - \rho^2)^{\alpha+1}
\\
& = \frac{1}{r} \int  u^2 div_{\mathbb{G}}( (r^2-\rho^2)^{\alpha+1} VZ)= \frac{Q}{r} \int Vu^2 (r^2-\rho^2)^{\alpha+1}
\notag
\\
& + \frac{1}{r} \int u^2 ZV (r^2 - \rho^2)^{\alpha+1}  -\frac{2(\alpha+1)}{r} \int Vu^2 \rho^2 (r^2-\rho^2)^{\alpha}
\notag
\end{align}
At this point, we note from \eqref{vasump}  that the following estimate holds
\begin{equation}\label{i13}
|\frac{Q}{r} \int Vu^2 (r^2-\rho^2)^{\alpha+1}| \leq CKr H(r)
\end{equation}
and also
\begin{equation}\label{j13}
|\frac{1}{r} \int u^2 ZV (r^2 - \rho^2)^{\alpha+1}| \leq CKr H(r)
\end{equation} 
for some universal $C$. 
We now write  the expression $\frac{2 \alpha + Q}{r} \int Vu^2 (r^2-\rho^2)^{\alpha+1}$ as
\begin{equation}\label{i14}
\frac{2 \alpha + Q}{r} \int Vu^2 (r^2-\rho^2)^{\alpha+1}= (2 \alpha+ Q)r \int Vu^2 (r^2 -\rho^2)^{\alpha}  - \frac{2\alpha+Q}{r} \int Vu^2 (r^2-\rho^2)^{\alpha} \rho^2
\end{equation}
Therefore, by using \eqref{i12}, \eqref{i13}, \eqref{j13} and \eqref{i14} in \eqref{i11} and also  by using \eqref{vasump}  we get that
\begin{equation}\label{i20}
 I'(r)= \frac{2 \alpha +Q}{r}I(r) + \frac{4(\alpha+1)}{r} \int (Zu)^2 (r^2 -\rho^2)^{\alpha} \psi + O(1) Kr H(r) + K_1(r)
\end{equation}

Finally from the definition of $N(r)$ as in \eqref{freq} and   from \eqref{v1} and \eqref{i20}  we get  that  the following inequality holds
\begin{align}\label{calc1}
& N'(r) = \frac{I'(r)}{H(r)} - \frac{H'(r)}{H(r)} N(r)
\\
& \geq    -C_1K r 
\notag
\\
& + ( \frac{4(\alpha+1) \int (Zu)^2 (r^2-\rho^2)^{\alpha} \psi}{ r H(r)}  - \frac{4(\alpha+1)(\int (r^2-\rho^2)^{\alpha} u Zu \psi )(\int (r^2- \rho^2)^{\alpha}u <\nabla_{H}\rho, \nabla_{H}u>\rho)}{rH^2(r)}
\notag
\\
& - \frac{4(\alpha+1)f(r) (\int |u| |Zu| (r^2- \rho^2)^{\alpha} \psi)}{r H(r)}
\notag
\end{align}
where $C_1$ is universal. Now by using \eqref{dis1}, we get that
\begin{equation}\label{calc2}
4(\alpha+1) \int (r^2- \rho^2)^{\alpha}u <\nabla_{H}\rho, \nabla_{H}u>\rho= 4(\alpha+1) \int u Zu (r^2- \rho^2)^{\alpha} \psi + K_2 (r)
\end{equation}
where
\begin{equation}\label{calc3}
|K_2(r)| \leq 4(\alpha + 1) f(r) H(r)
\end{equation}
Therefore by using  \eqref{calc2} and \eqref{calc3} in \eqref{calc1} we get that
\begin{align}\label{calc}
& N'(r) = \frac{I'(r)}{H(r)} - \frac{H'(r)}{H(r)} N(r)
\\
& \geq    -C_1K r 
\notag
\\
& + ( \frac{4(\alpha+1) \int (Zu)^2 (r^2-\rho^2)^{\alpha} \psi}{ r H(r)}  - \frac{4(\alpha+1)(\int (r^2-\rho^2)^{\alpha} u Zu \psi )^2}{rH^2(r)}
\notag
\\
& - \frac{8(\alpha+1)f(r) (\int |u| |Zu| (r^2- \rho^2)^{\alpha} \psi)}{r H(r)}
\notag
\end{align}

At this point, we need a  modified form of an  argument  used  in the proof of Theorem 7.3  in \cite{GR}. Before  proceeding further, we make the following remark.
\begin{rmrk}
In the subsequent expressions, all the constants $C_i$'s, $\tilde{C_i}$'s that will  appear are all  universal and only depends on $C_0, Q $ and $K_0, K_1$ as in \eqref{Dini}.
\end{rmrk}
 Note that from the definition of $E_u$ and the growth condition \eqref{dis} that the following holds
\begin{equation}\label{in2}
\int u Zu (r^2 - \rho^2)^{\alpha} \psi = \frac{I(r)}{ 2(\alpha+1)} + H_{1} (r)
\end{equation}
where 
\begin{equation}\label{bds}
|H_{1} (r)| \leq f(r)  H(r)
\end{equation}

Now from the expression  of $I(r)$ as in \eqref{i1}  and  also from the assumption on $V$ as in \eqref{vasump}, we have that 
\begin{equation}\label{i100}
I(r) + Kr^2 H(r) \geq 0
\end{equation}
Since $\alpha = \sqrt{K}$,  by taking into account \eqref{i100} we get that
\begin{equation}\label{in1}
\frac{I(r)}{ 2 (\alpha +1)}  + \sqrt{K} r^2 H(r) \geq 0
\end{equation}
By substituting \eqref{in1} in \eqref{in2}, we obtain
\begin{equation}
\int u Zu (r^2 - \rho^2 )^{\alpha} \psi - H_1(r) + \sqrt{K} r^2 H(r) \geq   0
\end{equation}
 Now  because of the bound in \eqref{bds} we get from the above inequality that the following holds
\begin{equation}\label{in3}
\int u Zu (r^2 - \rho^2)^{\alpha}\psi  +  \sqrt{K} r^2 H(r)  + f(r) H(r) \geq 0
\end{equation}
We now distinguish 2 cases. Either we have that

\textbf{Case 1}:
\begin{equation}\label{c1}
(\int u^2 (r^2 - \rho^2)^{\alpha} \psi)^{1/2} ( \int (Zu)^2 (r^2 - \rho^2)^{\alpha} \psi )^{1/2} \leq \sqrt{2} ( \int u Zu (r^2 - \rho^2 )^{\alpha} \psi + 8  \sqrt{K} r^2 H(r) + f(r) H(r))
\end{equation}

or

\medskip

\textbf{Case 2}:
\begin{equation}\label{c2}
(\int u^2 (r^2 - \rho^2)^{\alpha} \psi)^{1/2} ( \int (Zu)^2 (r^2 - \rho^2)^{\alpha} \psi )^{1/2} > \sqrt{2} ( \int u Zu (r^2 - \rho^2 )^{\alpha} \psi + 8 \sqrt{K} r^2 H(r)  + f(r) H(r))
\end{equation}
If  \textbf{Case} 1  ( i.e. \eqref{c1} ) occurs, then  by applying  Cauchy-Schwartz inequality  to  the expression
\begin{align}
& ( \frac{4(\alpha+1) \int (Zu)^2 (r^2-\rho^2)^{\alpha} \psi}{ r H(r)}  - \frac{4(\alpha+1)(\int (r^2-\rho^2)^{\alpha} u Zu \psi )^2}{ r H(r)^2} )
\notag
\end{align}
 in \eqref{calc},  we see that the above expression is non-negative.

Now we estimate the term 
\begin{align}
&- \frac{8(\alpha+1)f(r) (\int |u| |Zu| (r^2- \rho^2)^{\alpha} \psi)}{r H(r)} 
\notag
\end{align}
  in \eqref{calc} by using  Cauchy-Schwartz and  also  by using the estimate  \eqref{c1} and consequently obtain  
\begin{align}\label{in101}
& |\frac{8(\alpha+1)f(r) (\int |u| |Zu| (r^2- \rho^2)^{\alpha} \psi)}{r H(r)}| 
\\
& \leq \frac{8(\alpha+1) f(r) (\int u Zu (r^2 - \rho^2 )^{\alpha} \psi + 8  \sqrt{K} r^2 H(r) + f(r) H(r))}{r H(r)}
\notag
\end{align}
Now from \eqref{in2} we get that
\begin{equation}\label{in100}
|\frac{8(\alpha+1) f(r)\int u Zu (r^2 - \rho^2 )^{\alpha} \psi} {rH(r)}|  \leq 4 \frac{f(r)}{r} N(r) + \tilde{C} K \frac{f^2(r)}{r} 
\end{equation}
Therefore by using \eqref{in100}  in \eqref{in101} we have that the following holds
\begin{align}\label{calc5}
& N'(r) \geq  - C_2 \frac{f(r)}{r} N(r)   -C_3 K r 
\\
& -  \tilde{C_1}  K \frac{f^{2}(r)}{r} -\tilde{C_2} K \frac{f(r)}{r}
\notag
\end{align}
In \eqref{calc5}, we crucially used the fact that $\alpha=\sqrt{K} \leq K$.  Now since  $|f|\leq K_1$ we  obtain from \eqref{calc5} that the following holds
\begin{equation}\label{ct1}
N'(r) \geq  - C_2 \frac{f(r)}{r} N(r)   -C_4 K r -  C_5 K  \frac{f(r)}{r}
\end{equation}
If instead \textbf{Case} 2 (i.e.\eqref{c2} ) occurs, then there are 2 sub-cases. Either

\textbf{subcase 1}
\begin{equation}\label{c10}
\int u Zu (r^2-\rho^2)^{\alpha} \psi \geq 0
\end{equation}
or

\medskip

\textbf{subcase 2}
\begin{equation}\label{c11}
\int u Zu (r^2-\rho^2)^{\alpha} \psi \leq 0
\end{equation}  
If \textbf{subcase} 1 (i.e.\eqref{c10})  occurs, then the argument is relatively simple.  By using the inequality $(a+b)^{2} \geq a^2 $ when $a, b \geq 0$  in \eqref{c2} we get that
\begin{equation}\label{in7}
(\int u^2 (r^2 - \rho^2)^{\alpha} \psi)( \int (Zu)^2 (r^2 - \rho^2)^{\alpha} \psi ) \geq 2 (\int u Zu (r^2 - \rho^2)^{\alpha} \psi )^2  
\end{equation}
From \eqref{in7}, it follows that
\begin{align}
& \frac{4(\alpha+1) \int (Zu)^2 (r^2-\rho^2)^{\alpha} \psi}{ r H(r)}  - \frac{4(\alpha+1)(\int (r^2-\rho^2)^{\alpha} u Zu \psi )^2}{ r H(r)^2}
\\
& \geq \frac{2(\alpha+1) \int (Zu)^2 (r^2-\rho^2)^{\alpha} \psi}{ r H(r)}
\end{align}
By  using the above inequality  in \eqref{calc}, we get that
\begin{align}\label{calc6}
& N'(r) \geq    -C_1 K r 
\\
&  + \frac{2(\alpha+1) \int (Zu)^2 (r^2-\rho^2)^{\alpha} \psi}{ r H(r)} -  \frac{8(\alpha+1)f(r) (\int |u| |Zu| (r^2- \rho^2)^{\alpha} \psi)}{r H(r)}
\notag
\end{align}
Now by applying Cauchy-Schwartz inequality with $\ve$, i.e. the inequality 
\begin{equation}
2ab \leq  \ve a^2 + \frac{b^2}{\ve}
\end{equation}

  to the term  
  \[
  \frac{8(\alpha+1)f(r) (\int |u| |Zu| (r^2- \rho^2)^{\alpha} \psi)}{r H(r)}
  \]
  
    in \eqref{calc6} for small enough $\ve$, we get that
\begin{align}\label{calc7}
& N'(r) \geq  - C_1 K r  -C_6 K \frac{f(r)}{r}
\end{align}
 If instead \textbf{subcase} 2( i.e. \eqref{c11} ) occurs,  then we first note that \eqref{c2} trivially implies that

\begin{equation}\label{j100}
(\int u^2 (r^2 - \rho^2)^{\alpha} \psi)^{1/2} ( \int (Zu)^2 (r^2 - \rho^2)^{\alpha} \psi )^{1/2} > \sqrt{2} ( \int u Zu (r^2 - \rho^2 )^{\alpha} \psi +  \sqrt{K} r^2 H(r) + f(r) H(r))
\end{equation}
Now by squaring the above inequality  in \eqref{j100}( where we taking into account that the right hand side in the above inequality is non-negative due to \eqref{in3}) and then by using $(a+b)^2 \geq a^2 + 2ab$  for $b \geq 0$  with
\begin{align}
& a= \int u Zu (r^2 - \rho^2)^{\alpha}\psi
\\
& b= \sqrt{K} r^2 H(r) + f(r) H(r)
\notag
\end{align}
we get that

\begin{align}\label{pin13}
&  H(r) ( \int (Zu)^2 (r^2 - \rho^2)^{\alpha} \psi ) \geq  2( \int u Zu (r^2 - \rho^2 )^{\alpha} \psi)^2 
\\
&  + 4 \sqrt{K} r^2 (\int u Zu (r^2- \rho^2)^{\alpha} \psi) H(r) 
\notag
\\
&     + 4 f(r) (\int u Zu (r^2- \rho^2)^{\alpha} \psi) H(r)
\notag
 \end{align}
 Now we note that \eqref{in3} and \eqref{c11} together imply that
 \begin{equation}\label{jn3}
- \sqrt{K}r ^2 H(r) - f(r) H(r) \leq \int u Zu (r^2 - \rho^2)^{\alpha} \psi \leq 0
\end{equation}

  Therefore  by using   \eqref{pin13} in \eqref{calc} and then  by subsequently using the  estimate \eqref{jn3} we get that, 
 \begin{align}\label{pin4}
 & N'(r) \geq    -C_7 K r 
 \\
 & + \frac{2(\alpha+1) \int (Zu)^2 (r^2-\rho^2)^{\alpha} \psi}{ r H(r)}  - 16 K^{3/2}r^3  
 \notag
 \\
 & - C_{8} K \frac{f(r)}{r} - \frac{8(\alpha+1)f(r) (\int |u| |Zu| (r^2- \rho^2)^{\alpha} \psi)}{r H(r)}
 \notag
  \end{align} 
     where we   used the fact that $|f| \leq  K_1$ and $\alpha=\sqrt{K}$.   In order to get to \eqref{pin4}, we    also used the fact that  $(\alpha +1 ) \leq 2 \alpha$ since $\alpha > 1$.
     
     \medskip
     
     Now  if we consider   the term $16 K^{3/2} r^3$  in \eqref{pin4}, we note that it appears with a negative sign on the right hand side  and  the exponent of $K$ in that term is $\frac{3}{2}$  which is more than 1. This would not let us  conclude the desired monotonicity result in  \eqref{m1}. Therefore, we have  to get rid of this term in the final expression of $N'$.   In order  to do so,  we first note that since \eqref{in3} holds, therefore we get that  the following inequality holds
\begin{equation}\label{c50}
\int u Zu (r^2 - \rho^2)^{\alpha}\psi  +  8 \sqrt{K} r^2 H(r)  + f(r) H(r) \geq 7 \sqrt{K} r^2 H(r)
\end{equation}

Now because we are in \textbf{Case} 2, \eqref{c2} and \eqref{c50} together imply that 
\begin{equation}
(\int u^2 (r^2 - \rho^2)^{\alpha} \psi)^{1/2}(\int (Zu)^2 (r^2 - \rho^2)^{\alpha} \psi)^{1/2} \geq  7 \sqrt{2}    \sqrt{K} r^2  H(r)
\end{equation}
By squaring the above inequality and by cancelling off $H(r)$ from  both sides, we get that
\begin{equation}\label{pin1}
(\int (Zu)^2 (r^2 - \rho^2)^{\alpha} \psi ) \geq 94 K r^4 H(r)
\end{equation}

 By dividing both sides by $rH(r)$ we get

 \begin{equation}\label{p4}
 \frac{(\alpha+1) \int (Zu)^2 (r^2-\rho^2)^{\alpha} \psi}{ r H(r)} \geq 94  K^{3/2} r^3
 \end{equation}
  Therefore by writing 
\[  
   \frac{2(\alpha+1) \int (Zu)^2 (r^2-\rho^2)^{\alpha} \psi}{ r H(r)} = \frac{(\alpha+1) \int (Zu)^2 (r^2-\rho^2)^{\alpha} \psi}{ r H(r)} +\frac{(\alpha+1) \int (Zu)^2 (r^2-\rho^2)^{\alpha} \psi}{ r H(r)} 
  \]
  and by using \eqref{p4} in \eqref{pin4}, we get that
 \begin{align}\label{pin5'}
 & N'(r) \geq   -C_7 K r -  C_{8} K \frac{f(r)}{r}  + 94 K^{3/2} r^3
 \\
 & + \frac{(\alpha+1) \int (Zu)^2 (r^2-\rho^2)^{\alpha} \psi}{ r H(r)}  
 \notag
 \\
 &- 16 K^{3/2} r^3  - \frac{8(\alpha+1)f(r) (\int |u| |Zu| (r^2- \rho^2)^{\alpha} \psi)}{r H(r)}
 \notag
 \end{align}
 Therefore we see that  the  estimate \eqref{p4} allows us to get rid of the undesirable term $16 K^{3/2} r^3$ and  now from \eqref{pin5'}  one can easily infer that the following inequality  holds
 \begin{align}\label{pin5}
 & N'(r) \geq   -C_7 K r -  C_{8} K \frac{f(r)}{r}  
 \\
 & + \frac{(\alpha+1) \int (Zu)^2 (r^2-\rho^2)^{\alpha} \psi}{ r H(r)}  
 \notag
 \\
 & - \frac{8(\alpha+1)f(r) (\int |u| |Zu| (r^2- \rho^2)^{\alpha} \psi)}{r H(r)}
 \notag
 \end{align} 
  Again by applying Cauchy-Schwartz inequality with $\ve$ to the term 
  
  \[
   \frac{8(\alpha+1)f(r) (\int |u| |Zu| (r^2- \rho^2)^{\alpha} \psi)}{r H(r)}
   \] 
  with an appropriate choice of $\ve$, we  get  that  the following estimate  holds 
  \begin{equation}\label{jn5}
  N'(r) \geq - C_7 K r - C_9 K \frac{f(r)}{r}
  \end{equation}
  Therefore  in conclusion,  we have that  in all the cases,  either the estimate  \eqref{ct1}, \eqref{calc7} or \eqref{jn5}  holds.   Each  of these estimates implies that for some universal constant $\tilde{C_0}$,   the following inequality  holds
  \begin{equation}\label{jn7}
  N'(r) \geq -\tilde{C_0}( \frac{f(r)}{r} N(r) + Kr + K \frac{f(r)}{r})
  \end{equation}
  \eqref{m1} now follows from \eqref{jn7} in a standard way.

\end{proof}

\subsection{Proof of  estimate \eqref{est} in Theorem \ref{main}}
We note that although \eqref{m} in the monotonicity Theorem \ref{mon} is different from its counterpart Theorem 3.1 in \cite{BG1}, nevertheless it still implies that the following inequality holds
\begin{equation}\label{ineq}
N(r) \leq  \tilde{C_1} (N(s) + \tilde{C_2} K),\ \ \ \ \ \ \ \ \ \text{for}\  0<r<s< 1.
\end{equation}
Using \eqref{ineq}, we can argue in the same way as  in Section 4 in \cite{BG1} to conclude that our desired estimate \eqref{est} in Theorem \ref{main} holds.

\end{document}